\numberwithin{equation}{section}
\newtheorem{theorem}{Theorem}[section]
\newtheorem{lemma}[theorem]{Lemma}
\newtheorem{proposition}[theorem]{Proposition}
\newtheorem{corollary}[theorem]{Corollary}
\newtheorem{question}[theorem]{Question}
\theoremstyle{definition}
\theoremstyle{remark}
\newtheorem{remark}[theorem]{Remark}
\newcommand{\Ass}{\operatorname{Ass}}
\newcommand{\Spec}{\operatorname{Spec}}
\newcommand{\Ht}{\operatorname{ht}}
\newcommand{\Ext}{\operatorname{Ext}}
\newcommand{\Supp}{\operatorname{Supp}}
\newcommand{\Att}{\operatorname{Att}}
\newcommand{\Ann}{\operatorname{Ann}}
\newcommand{\depth}{\operatorname{depth}}
\newcommand{\Var}{\operatorname{Var}}
\newcommand{\Psupp}{\operatorname{Psupp}}
\newcommand{\p}{\mathfrak p}
\newcommand{\q}{\mathfrak q}
\newcommand{\m}{\mathfrak m}
\newcommand{\n}{\mathfrak n}
\newcommand{\R}{\widehat R}
\begin{document}
	\title{The Depth Formula for modules over quotients of Gorenstein rings }
	
	\author{Tran Nguyen An}
	\address{Thai Nguyen University of Education, Thai Nguyen, Vietnam}
	\email{antn@tnue.edu.vn; antrannguyen@gmail.com}
	
	\author[Pham Hung Quy]{Pham Hung Quy}
	\address{Department of Mathematics, FPT University, Hanoi, Vietnam}
	\email{quyph@fe.edu.vn}
	
	\subjclass[2020]{Primary 13D45, 13C15, 13H10}
	\keywords{Depth, finiteness dimension, local cohomology, attached primes, Cohen-Macaulay ring, Gorenstein ring}
\thanks{The second author was partially supported by the Vietnam National Foundation 
	for Science and Technology Development (NAFOSTED) under grant number 101.04-2023.08.}
	\begin{abstract}
		A foundational result by C. Huneke and V. Trivedi provides a formula for the depth of an ideal in terms of height, computed over a finite set of prime ideals, for rings that are homomorphic images of regular rings. Building on a result by the first author for local quotients of Cohen-Macaulay rings, this paper first gives a new proof and derives a similar formula for the finiteness dimension. Our main result then establishes the depth formula for non-local rings that are homomorphic images of a finite-dimensional Gorenstein ring.
	\end{abstract}
	
	\maketitle
	
	\section{Introduction}
		Throughout this paper, $R$ will denote a  commutative Noetherian ring, $I$ an ideal of $R$, and $M$ a finitely generated $R$-module. We denote by $\depth_R(I, M)$ the depth of $I$ on $M$, which is the length of a maximal $M$-regular sequence in $I$. It is a fundamental invariant in commutative algebra, connected to local cohomology by the well-known formula
	$$\depth_R(I, M)=\min\{i \ge 0 \mid H_I^i(M)\neq 0\}.$$
	In \cite{HT}, Huneke and Trivedi  established a remarkable connection between depth and height. They showed that if $R$ is a homomorphic image of a regular ring, there exists a finite set of prime ideals $\Lambda_M \subseteq \mathrm{Spec}(R)$, depending only on $M$, such that for any ideal $I$ of $R$,
	$$\mathrm{depth}_R(I, M) = \min_{\p \in \Lambda_M} \{\Ht(\frac{I+\p}{\p}) + \mathrm{depth}M_{\p}\}.$$
	While this formula is powerful, the set $\Lambda_M$ was defined in terms of the generic points of certain Zariski-closed sets, which, while effective, is not explicitly described through more standard invariants of the module $M$.
	
	In a subsequent work \cite{An}, the first author shown that this formula holds for a local ring $(R, \m)$ provided that the so-called pseudo-supports of $M$ \cite{BS1}, $$\Psupp^i_R(M)=\{\p\in\Spec (R)\mid  H^{i-\dim (R/\p)}_{\p R_{\p}}(M_{\p})\neq 0\},$$  are closed subsets of $\Spec(R)$.  
	This condition is satisfied, when $R$ is a quotient of a Cohen-Macaulay local ring, in which case $\Lambda_M$ can be described as $\bigcup_{i=0}^{\dim M} \min \Var(\Ann_R(H_{\m}^i(M)))$.

	The main goal of this paper is to provide new proofs and extensions of this depth formula for rings that are homomorphic images  Gorenstein rings. To achieve this, we first focus on the local case. We provide a new, self-contained proof of the depth formula for a local ring $(R, \m)$ that is a homomorphic image of a Cohen-Macaulay local ring. Our approach, which relies on properties of attached primes of local cohomology modules. As an immediate application of this local result, we derive a formula for the finiteness dimension $f_I(M)$ in terms of this explicit finite set (Corollary \ref{C:f-dim}).
	
Our main result then extends the depth formula to the global setting for rings that are homomorphic images of Gorenstein rings. The set $\Lambda_M$ is describled using associated primes of $\mathrm{Ext}$ modules which is well-defined, i.e., it is independent of the choice of the ambient Gorenstein ring.
	
	\begin{theorem}\label{T:Global_Gorenstein}
		Let $R$ be a Noetherian ring which is a homomorphic image of a Gorenstein ring $S$ of finite dimension. Let $M$ be a finitely generated $R$-module. If we set
		$$\Lambda_M = \bigcup_{i \ge 0} \min \Ass_R(\Ext^i_S(M, S)),$$
		then the set $\Lambda_M$ is finite and independent of the choice of $S$. Furthermore, for any ideal $I$ of $R$,
		$$\mathrm{depth}_R(I, M) = \min_{\p \in \Lambda_M} \{\Ht(\frac{I+ \p}{\p}) + \mathrm{depth}M_{\p}\}.$$
	\end{theorem}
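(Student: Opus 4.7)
My plan addresses each of the three claims of the theorem in turn. Write $\fa = \ker(S \to R)$, so that $R = S/\fa$; for $\p \in \Spec R$ let $\mathfrak P$ denote its preimage in $S$, and observe that $S_{\mathfrak P}$ is a Gorenstein local ring of dimension $\Ht \mathfrak P$. Finiteness of $\Lambda_M$ is immediate: since $S$ is Gorenstein of finite Krull dimension $d$, its self-injective dimension equals $d$, so $\Ext^i_S(M,S) = 0$ for $i > d$, and each Ext module is finitely generated, hence has only finitely many associated primes.

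For the independence of $S$, I would give an intrinsic description of $\Lambda_M$ using local duality. For each $\p \in \Supp M$, local duality over the Gorenstein local ring $S_{\mathfrak P}$ yields the Matlis duality
\[
H^{\Ht \mathfrak P - i}_{\p R_\p}(M_\p) \;\cong\; \Hom_{R_\p}\bigl(\Ext^i_{S_\p}(M_\p, S_\p),\, E_{R_\p}(R_\p/\p R_\p)\bigr).
\]
Since $\min \Ass = \min \Supp$ for finitely generated modules and $\Ext^i_{S_\p}(M_\p, S_\p) = \Ext^i_S(M, S)_\p$, the condition $\p \in \min \Ass \Ext^i_S(M, S)$ translates, upon setting $k = \Ht \mathfrak P - i$, to: $H^k_{\p R_\p}(M_\p) \neq 0$ while $H^{k - \dim(R_\p/\q R_\p)}_{\q R_\q}(M_\q) = 0$ for every $\q \subsetneq \p$ with $\q \supseteq \ann M$. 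The identity $\Ht \mathfrak P - \Ht \mathfrak Q = \dim(S_\mathfrak P / \mathfrak Q S_\mathfrak P) = \dim(R_\p / \q R_\p)$, valid in the catenary ring $S$, is what makes this characterization intrinsic to $R$ and $M$, proving that $\Lambda_M$ does not depend on $S$.

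For the depth formula, the inequality $\depth_R(I, M) \le \Ht((I + \p)/\p) + \depth M_\p$ is standard for every $\p \in \Supp M$ and gives the easy direction. For the converse I would use the identity $\depth_R(I, M) = \min_{\p \supseteq I,\, \p \in \Supp M} \depth_{R_\p}(IR_\p, M_\p)$, coming from $H^i_I(M)_\p = H^i_{IR_\p}(M_\p)$, and apply the local version of the depth formula (proved earlier in the paper) to each $R_\p$, which is a quotient of the Cohen--Macaulay local ring $S_\p$. The compatibility $\Lambda^{R_\p}_{M_\p} = \{\q \in \Lambda_M : \q \subseteq \p\}$ follows from $\min\Ass(N_\p) = \{\q \in \min\Ass N : \q \subseteq \p\}$ applied to $N = \Ext^i_S(M, S)$. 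Substituting and exchanging the two minima reduces the remaining work to the elementary identity $\min_{\p \supseteq I + \q} \Ht_{R_\p}\bigl((I + \q) R_\p / \q R_\p\bigr) = \Ht_R((I + \q)/\q)$. The main obstacle is the independence step: one must verify that the seemingly $S$-dependent expression $\min \Ass \Ext^i_S(M, S)$ has an intrinsic characterization, which hinges on the catenarity of the Gorenstein ring $S$ and on expressing height differences in $S$ as quotient dimensions in $R$.
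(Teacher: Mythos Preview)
Your proposal is correct and follows essentially the same strategy as the paper: finiteness from the bounded injective dimension of $S$, independence by giving an intrinsic local-duality characterization of $\Lambda_M$, and the depth formula by localizing and invoking the already-proved local case together with the compatibility $\Lambda_{M_\p} = \{\q R_\p : \q \in \Lambda_M,\ \q \subseteq \p\}$. The only cosmetic difference is in the intrinsic description of $\Lambda_M$: the paper phrases it as $\p R_\p \in \bigcup_i \min \Att_{R_\p} H^i_{\p R_\p}(M_\p)$, whereas you phrase it via vanishing of shifted local cohomology at strictly smaller primes; these are equivalent (both unwind to $\p \in \min \Supp \Ext^i_S(M,S)$ once one uses $\min\Ass = \min\Supp$ and the height identity $\Ht\mathfrak P - \Ht\mathfrak Q = \dim(R_\p/\q R_\p)$ in the Cohen--Macaulay local ring $S_{\mathfrak P}$).
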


	The paper is structured as follows. Section 2 collects preliminary results on attached primes and local duality. In Section 3, firstly we present our new proof of the depth formula in the local Cohen-Macaulay case and derive the formula for the finiteness dimension. The last part of Section 3 is dedicated to the proof of Theorem \ref{T:Global_Gorenstein}; we first establish that the set $\Lambda_M$ is well-defined and then prove the depth formula in the global setting. Finally, we discuss an open question that arises from our work.
	\section{Preliminaries}
	In this section, we collect essential results on attached primes and local duality. Our primary reference is Brodmann and Sharp \cite{BS}.

Following I. G. Macdonald \cite{Mac}, every Artinian $R$-module $A$ has a minimal secondary representation $A=A_1+\ldots +A_n,$ where $A_i$ is $\p_i$-secondary. The set of prime ideals $\{\p_1,\ldots ,\p_n\}$ is an invariant of $A$, called the set of attached prime ideals of $A$, and is denoted by $\Att_R A$.

\begin{lemma}\label{L:1} \cite[Ch. 7]{BS}
	Let $A$ be an Artinian $R$-module. Then
	\begin{enumerate}[(i)] \rm
		\item \textit{$A\neq 0$ if and only if $\Att_RA \neq \emptyset.$}
		\item \textit{$\min \Att_RA = \min \Var(\Ann_RA).$}
		\item \textit{If $(R, \m)$ is local, then $0 < \ell_R(A) < \infty$ if and only if $\Att_RA = \{\m\}.$}
	\end{enumerate}
\end{lemma}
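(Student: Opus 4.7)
The plan is to establish the three parts in sequence, relying on Macdonald's theorem that every Artinian $R$-module $A$ admits a minimal secondary representation $A = A_1 + \cdots + A_n$ with $A_i$ being $\p_i$-secondary, together with standard manipulations of radicals of annihilators. Parts (i) and (ii) are essentially formal; part (iii) will require one extra structural input about Artinian modules over Artinian local rings.

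For (i), I would invoke the existence theorem directly: if $A \neq 0$, a minimal secondary representation has at least one term, so $\Att_R A \neq \emptyset$; conversely, $A = 0$ has the empty representation and no attached primes. For (ii), from $A = A_1 + \cdots + A_n$ I deduce $\Ann_R A = \bigcap_i \Ann_R A_i$, hence $\sqrt{\Ann_R A} = \bigcap_i \p_i$, and $\Var(\Ann_R A) = \bigcup_i \Var(\p_i)$. The minimal elements of this union are exactly the minimal members of $\{\p_1, \ldots, \p_n\} = \Att_R A$, giving the stated equality.

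For (iii), assume first that $\Att_R A = \{\m\}$. By (ii), $\Var(\Ann_R A) = \{\m\}$, so $\m^k \subseteq \Ann_R A$ for some $k$, and $A$ becomes an Artinian module over the Artinian local ring $R/\m^k$. I would then filter $A$ by $A \supseteq \m A \supseteq \m^2 A \supseteq \cdots \supseteq \m^k A = 0$; each Artinian subquotient $\m^i A/\m^{i+1} A$ is a vector space over $R/\m$, and an Artinian vector space is finite-dimensional, so $A$ has finite length. Combined with $A \neq 0$ from (i), this gives $0 < \ell_R(A) < \infty$. The converse is easy: a finite-length nonzero module is annihilated by a power of $\m$ (visible from any composition series), so $\Var(\Ann_R A) = \{\m\}$, every attached prime sits in this variety, and with (i) this forces $\Att_R A = \{\m\}$. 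The only genuinely non-formal step in the whole lemma is the forward direction of (iii) --- translating the attached-prime condition $\Att_R A = \{\m\}$ into finite length --- and this is precisely where the Loewy-style filtration argument above does the real work.
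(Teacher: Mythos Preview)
Your argument is correct and is the standard one. Note, however, that the paper does not actually give a proof of this lemma: it is stated with a bare citation to \cite[Ch.~7]{BS}, so there is nothing to compare against beyond observing that your proof is precisely the kind of elementary argument one finds in that reference.

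One small point worth tightening in your write-up of the converse direction of (iii): you assert that ``every attached prime sits in this variety,'' which is true but does not follow from (ii) alone, since (ii) only identifies the \emph{minimal} attached primes with the minimal primes of $\Var(\Ann_R A)$. The clean justification is that each $\p_i = \sqrt{\Ann_R A_i} \supseteq \sqrt{\Ann_R A}$ because $\Ann_R A \subseteq \Ann_R A_i$, so $\Att_R A \subseteq \Var(\Ann_R A)$ holds in general. With that one sentence added, the proof is complete.
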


When $(R, \m)$ is a local ring, the local cohomology modules $H^i_{\m}(M)$ are Artinian for any finitely generated $R$-module $M$ \cite[Thm. 7.1.3]{BS}. This allows us to study their sets of attached primes. If $\R$ is the $\m$-adic completion of $R$, any Artinian $R$-module $A$ has a natural structure as an Artinian $\R$-module.

\begin{lemma} \cite[8.2.5]{BS} \label{L:2}
	Let $(R, \m)$ be a local ring and $A$ an Artinian $R$-module. Then $$\Att_RA=\{\p \cap R \mid \p \in \Att_{\R} A\}.$$
\end{lemma}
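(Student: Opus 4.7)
The plan is to use the secondary-representation characterization of attached primes: a prime $\p$ belongs to $\Att_R A$ if and only if there exists a submodule $L \subseteq A$ such that $A/L$ is $\p$-secondary, and analogously over $\R$. The key enabling observation, which I would set up first, is that since $A$ is Artinian over the local ring $(R,\m)$, every element of $A$ is killed by some power of $\m$; consequently the $R$-action extends canonically to an $\R$-action, and the $R$-submodules of $A$ coincide with the $\R$-submodules. Thus I may freely switch scalar viewpoints at the level of submodules and quotients.

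For the inclusion $\supseteq$, I would pick $\q \in \Att_{\R} A$ together with a submodule $L$ such that $A/L$ is $\q$-secondary as an $\R$-module. Restricting scalars, for every $r \in R$ multiplication by $r$ on $A/L$ agrees with multiplication by the image of $r$ in $\R$; this is surjective when $r \notin \q$ and nilpotent when $r \in \q$. Hence $A/L$ is $(\q \cap R)$-secondary as an $R$-module, so $\q \cap R \in \Att_R(A/L) \subseteq \Att_R A$.

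For the inclusion $\subseteq$, start with $\p \in \Att_R A$ witnessed by an $R$-submodule $L$ with $A/L$ being $\p$-secondary. Regarding $A/L$ as an $\R$-module, take a \emph{minimal} $\R$-secondary decomposition $A/L = B_1 + \cdots + B_k$ with each $B_i$ being $\q_i$-secondary. Each $\q_i$ lies in $\Att_{\R}(A/L) \subseteq \Att_{\R} A$, so it suffices to show $\q_i \cap R = \p$ for every $i$. The nilpotent half of the secondary dichotomy, checked one $r \in R$ at a time, yields $\p = \bigcap_i (\q_i \cap R)$. For the surjective half, the minimality condition $B_j \not\subseteq \sum_{i \neq j} B_i$ forces any $r \in R$ that acts surjectively on $A/L$ to act surjectively on each individual $B_j$ (otherwise $r$ would be nilpotent on some $B_j$ while being surjective on the quotient $B_j / (B_j \cap \sum_{i\neq j} B_i) \neq 0$, a contradiction). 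This upgrades the intersection identity to $\p = \q_j \cap R$ for each $j$ separately.

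The main obstacle I anticipate is precisely the last upgrade: ruling out the scenario in which each $\q_i \cap R$ is a proper subprime of $\p$ whose intersection over $i$ happens to recover $\p$. Here the minimality of the $\R$-secondary decomposition is essential, and it is the one place where the argument cannot be reduced to a formal restriction-of-scalars; everything else is bookkeeping with the surjective/nilpotent dichotomy.
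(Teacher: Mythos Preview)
The paper does not give its own proof of this lemma; it is simply quoted from \cite[8.2.5]{BS}. So there is nothing to compare against, and the relevant question is whether your argument stands on its own. It does.

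Your setup (that the $R$-submodules and $\R$-submodules of $A$ coincide because every element of $A$ is annihilated by a power of $\m$) is correct and is exactly what makes the passage between the two scalar rings painless. The inclusion $\supseteq$ is handled cleanly: restricting a $\q$-secondary $\R$-quotient to $R$ yields a $(\q\cap R)$-secondary $R$-module, and $\Att_R(A/L)\subseteq\Att_R A$ because attached primes of a quotient lie among those of the module.

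For the inclusion $\subseteq$, your argument via a minimal $\R$-secondary decomposition $A/L=B_1+\cdots+B_k$ is correct, including the ``surjective half'' contradiction using $B_j\not\subseteq\sum_{i\neq j}B_i$. One remark: this last step is stronger than what you need. Once you have established $\p=\bigcap_i(\q_i\cap R)$ with $\p$ prime and each $\q_i\cap R$ prime, primality of $\p$ alone forces $\p=\q_j\cap R$ for \emph{some} $j$ (since $\prod_i(\q_i\cap R)\subseteq\p$ implies $\q_j\cap R\subseteq\p$ for some $j$, and the reverse inclusion is automatic). That already gives $\p\in\{\q\cap R\mid\q\in\Att_{\R}A\}$. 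Your minimality argument proves the sharper fact that \emph{every} $\q_j\cap R$ equals $\p$, which is true and interesting but not required for the statement.
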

	
	The following result from \cite{NQ} is a key ingredient in the proof of our first main theorem.
	
	\begin{lemma}  \label{L:NQ}  We assume that $(R, \frak m)$ is an image of a Cohen-Macaulay local ring. The following statements are equivalent:
	\begin{enumerate}[{(i)}]\rm
		\item {\it $R$ is an image of a Cohen-Macaulay local ring;}
		\item {\it $ \mathrm{Att}_{R_{\frak p}}\big(H^{i-\dim (R/\frak p)}_{\frak p R_{\frak p}}(M_{\frak p})\big)=\big\{\frak q R_{\frak p}\mid \frak q\in\mathrm{Att}_R(H^i_{\frak m}(M)), \frak q \subseteq \frak p\big\}$
			for every finitely generated $R$-module $M$, integer $i\ge 0$ and prime ideal $\frak p$ of $R$;}
		\item {\it $\displaystyle\mathrm{Att}_{\widehat{R}}(H^i_{\frak m}(M))=\bigcup_{\frak p\in\mathrm{Att}_R(H^i_{\frak m}(M))}\mathrm{Ass}_{\widehat{R}}(\widehat{R}/\frak p\widehat{R})$ for every finitely generated $R$-module $M$ and  integer $i\geq 0$.}
	\end{enumerate}
\end{lemma}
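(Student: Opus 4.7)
The strategy is to use local duality to convert attached primes of $H^i_\mathfrak m(M)$ into associated primes of a single Ext module over a Gorenstein ambient ring, and then to read off all three conditions from one common translation. Since $R$ is an image of a Cohen-Macaulay local ring, its completion $\R$ is an image of a complete Gorenstein local ring $(T,\mathfrak n)$ of some dimension $n$ (e.g.\ a complete regular local ring via Cohen's structure theorem). Local duality yields
$$H^i_\mathfrak m(M) \;\cong\; \Hom_T\bigl(\Ext^{n-i}_T(M,T),\, E_T(T/\mathfrak n)\bigr),$$
and Macdonald's identity $\Att(\Hom(N,E)) = \Ass(N)$ for a finitely generated $N$ converts this into
$$\Att_{\R}\bigl(H^i_\mathfrak m(M)\bigr) \;=\; \Ass_{\R}\bigl(\Ext^{n-i}_T(M,T)\bigr).$$
All three statements will be extracted from this identity by varying the local ring on which local duality is applied.

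For (i)~$\Rightarrow$~(iii), I would combine the displayed identity with Lemma~\ref{L:2}. The inclusion $\bigcup_{\p \in \Att_R(H^i_\mathfrak m(M))}\Ass_{\R}(\R/\p\R) \subseteq \Att_{\R}(H^i_\mathfrak m(M))$ is automatic from the behavior of $\Ass$ under faithfully flat base change. For the reverse inclusion, the CM-image hypothesis forces the formal fibers of $R$ to be Cohen-Macaulay, which is exactly what is needed to decompose $\Ass_{\R}$ of a finitely generated $\R$-module as a union indexed by the contractions of its associated primes back to $R$; applied to $N = \Ext^{n-i}_T(M,T)$, this delivers (iii).

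For (iii)~$\Rightarrow$~(ii), I would localize. Let $\p^*$ denote the preimage of $\p$ in $T$. Because $T$ is catenary and equidimensional, $T_{\p^*}$ is Gorenstein of dimension $n - \dim(R/\p)$, so applying local duality over $T_{\p^*}$ to the module $M_\p$ gives
$$\Att_{R_\p}\bigl(H^{i-\dim(R/\p)}_{\p R_\p}(M_\p)\bigr) \;=\; \Ass_{R_\p}\bigl(\Ext^{n-i}_T(M,T)_{\p^*}\bigr).$$
Since $\Ass$ commutes with localization, the right-hand side consists of contractions to $R_\p$ of those attached primes of $H^i_\mathfrak m(M)$ over $\R$ that lie under $\p^*\R$; translating through Lemma~\ref{L:2} and (iii) then produces (ii).

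The main obstacle is (ii)~$\Rightarrow$~(i): local duality cannot be invoked since the CM-image property is what one is trying to prove. The plan is to observe that (ii) forces every pseudo-support $\Psupp^i_R(M)$ to equal $\Var\bigl(\bigcap_{\q \in \Att_R(H^i_\mathfrak m(M))}\q\bigr)$ and therefore to be closed in $\Spec(R)$, and then to invoke Kawasaki's characterization of quotients of Cohen-Macaulay local rings via closedness of pseudo-supports (equivalently, via the existence of CM covers). Verifying the dimension shift $i \mapsto i - \dim(R/\p)$ and checking that the catenary hypotheses implicit in Kawasaki's criterion are furnished by (ii) is the delicate point; this is where the argument of~\cite{NQ} does the substantive work.
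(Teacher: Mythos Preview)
The paper does not supply a proof of this lemma; it is imported from \cite{NQ} and used as a black box, so there is no in-paper argument against which to compare your sketch.

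Your sketch itself has a structural gap in the step (iii)~$\Rightarrow$~(ii). The Gorenstein ring $T$ you build surjects onto $\widehat R$, not onto $R$; hence for $\p\in\Spec(R)$ there is no well-defined ``preimage $\p^*$ of $\p$ in $T$,'' and $M_\p$ is not a $T_{\p^*}$-module. The displayed identity
\[
\Att_{R_\p}\bigl(H^{\,i-\dim(R/\p)}_{\p R_\p}(M_\p)\bigr)\;=\;\Ass_{R_\p}\bigl(\Ext^{\,n-i}_T(M,T)_{\p^*}\bigr)
\]
therefore does not typecheck: the left side lives over $R_\p$, while the right side, once made sense of, lives over some localization of $\widehat R$. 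To repair it one must choose $P\in\Spec(\widehat R)$ minimal over $\p\widehat R$, pass through $\widehat R_P$ via flat base change for local cohomology, verify $\dim(\widehat R/P)=\dim(R/\p)$ (which already uses Cohen--Macaulay formal fibers, i.e.\ essentially (i)), and then descend attached primes from $\widehat R_P$ back to $R_\p$ via (iii) and Lemma~\ref{L:2}. None of this bookkeeping is in your outline, and it cannot be bypassed by choosing instead a Gorenstein cover of $R$ itself: a Cohen--Macaulay local ring need not possess a canonical module, so a quotient of a Cohen--Macaulay local ring is not in general a quotient of a Gorenstein one.

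For (ii)~$\Rightarrow$~(i) you openly defer to \cite{NQ} and Kawasaki; that is reasonable, but note that the lemma as stated in this paper already assumes (i) as a standing hypothesis, so only the forward implications (i)~$\Rightarrow$~(ii) and (i)~$\Rightarrow$~(iii) carry content here.
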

	
	The next lemma connects attached primes of local cohomology to associated primes of Ext modules. It follows from Local Duality Theorem (see \cite[11.2.6]{BS}).
	
	\begin{lemma}\label{L:key_duality}
		Let $(R, \m)$ be a local ring that is a homomorphic image of an $n$-dimensional Gorenstein local ring $(S, \n)$. Let $M$ be a finitely generated $R$-module. Then for any $i \ge 0$,
		$$\Att_R(H_{\m}^i(M))=\Ass_R(\Ext_S^{n-i}(M, S)).$$
	\end{lemma}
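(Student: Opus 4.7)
The plan is to use the Local Duality Theorem to convert the local cohomology module into a ``Matlis-dual''-type Hom module, and then exploit the fact that Matlis duality interchanges attached and associated primes. To set things up, let $\fa := \ker(S \twoheadrightarrow R)$, set $N := \Ext^{n-i}_S(M, S)$, and let $E := E_S(S/\n)$. Since $M$ is annihilated by $\fa$, we may identify $H_{\m}^i(M) = H_{\n}^i(M)$ as $S$-modules, so the Local Duality Theorem \cite[11.2.6]{BS} furnishes a natural $S$-linear (hence $R$-linear) isomorphism $H_{\m}^i(M) \cong \Hom_S(N, E)$.

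Both $N$ and $\Hom_S(N, E)$ are annihilated by $\fa$, so the standard bijection $\q \leftrightarrow \q/\fa$ identifies their $R$-associated (respectively $R$-attached) primes with their $S$-associated (respectively $S$-attached) primes. Therefore the claim reduces to the equality
$$\Att_S(\Hom_S(N, E)) = \Ass_S(N),$$
where $N$ is now just a finitely generated $S$-module.

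To obtain this last equality, I would pass to the $\n$-adic completion $\widehat S$. The adjunction isomorphism $\Hom_S(N, E) \cong \Hom_{\widehat S}(\widehat N, E)$, with $\widehat N := N \otimes_S \widehat S$ finitely generated over the complete Gorenstein local ring $\widehat S$, realizes $\Hom_S(N, E)$ as the Matlis dual of $\widehat N$ over $\widehat S$. Matlis duality then yields $\Att_{\widehat S}(\Hom_S(N, E)) = \Ass_{\widehat S}(\widehat N)$. Descending to $S$ via Lemma \ref{L:2} on the attached-primes side and the faithfully flat base change formula $\Ass_S(N) = \{\p \cap S : \p \in \Ass_{\widehat S}(\widehat N)\}$ on the associated-primes side produces exactly the desired equality.

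The only slightly delicate step is the descent from $\widehat S$ to $S$ in the final paragraph, but both descent formulas are completely standard; once Local Duality and Matlis duality are brought to bear there is no serious obstacle.
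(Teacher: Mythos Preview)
Your proposal is correct and follows the same route the paper intends: the paper simply records that the lemma ``follows from Local Duality Theorem (see \cite[11.2.6]{BS})'' and gives no further details, while you carry out exactly that derivation explicitly (Local Duality to get $H_{\m}^i(M)\cong\Hom_S(N,E)$, then Matlis duality over the completion together with the standard descent formulas for attached and associated primes). There is nothing to correct.
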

	
	\section{Proof of the main result}
	In this section, we prove Theorem \ref{T:Local_CM} and its corollary.
	
	\begin{lemma}\label{L:depth_inequality}
		For any prime ideal $\p \in \Supp(M)$, we have 
		$$\mathrm{depth}_R(I, M) \le \Ht(\frac{I+ \p}{\p}) + \mathrm{depth}M_{\p}.$$
	\end{lemma}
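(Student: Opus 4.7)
The plan is to prove this lemma—the easy direction of the depth formula, requiring no Gorenstein hypothesis—by inserting a single well-chosen prime between $I$ and $\p$. Set $t=\Ht((I+\p)/\p)$ and $s=\depth M_\p$, and pick a prime $\q\supseteq I+\p$ realizing the infimum, so that $\Ht(\q/\p)=t$. Since $\q\supseteq\p\in\Supp M$, we also have $\q\in\Supp M$, and the lemma reduces to establishing the two inequalities
\[
\depth_R(I,M)\;\le\;\depth M_\q\quad\text{and}\quad\depth M_\q\;\le\;\depth M_\p+\Ht(\q/\p),
\]
whose sum is exactly $s+t$.

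The first inequality is a routine localization argument. Given a maximal $M$-regular sequence $x_1,\ldots,x_d$ in $I\subseteq\q$, I would check inductively that $x_1/1,\ldots,x_d/1$ remains $M_\q$-regular in $\q R_\q$, using that the associated primes of each intermediate quotient $M/(x_1,\ldots,x_{i-1})M$ contained in $\q$ are exactly those of its localization at $\q$. This yields $d\le\depth_{R_\q}(\q R_\q, M_\q)=\depth M_\q$.

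The second inequality is the classical statement that depth grows by at most one per step along a prime chain; after localizing at $\q$, it reads $\depth M\le\depth M_\p+\dim R/\p$ for a local ring $(R,\m)$ with $\p\in\Supp M$. I would prove it by induction on $\Ht(\q/\p)$, splitting into three cases. When $\p\in\Ass M$, the bound reduces to the classical inequality $\depth M\le\dim R/\p$ for associated primes, provable by choosing $m\in M$ with $\Ann m=\p$, showing $m\notin xM$ for any $M$-regular $x$ via the Krull intersection theorem, and inducting on $\depth M$. When $\p\notin\Ass M$ and no $\mathfrak{r}\in\Ass M$ satisfies $\p\subsetneq\mathfrak{r}\subseteq\q$, prime avoidance furnishes $x\in\p$ regular on both $M$ and $M_\p$, whereupon the induction applied to $M/xM$ closes the step. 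In the remaining case, some $\mathfrak{r}\in\Ass M$ with $\p\subsetneq\mathfrak{r}\subseteq\q$ blocks prime avoidance, but then $\Ht(\q/\mathfrak{r})<\Ht(\q/\p)$ and the inductive hypothesis applied to the pair $(\mathfrak{r},\q)$, combined with $\depth M_\mathfrak{r}=0$, yields the desired bound.

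The main obstacle is orchestrating this chain inequality together with the classical lemma $\depth M\le\dim R/\p$ for $\p\in\Ass M$. Both ingredients are standard and completely independent of the Gorenstein setup of the main theorem, so the lemma could equivalently be obtained by citing this well-known chain inequality from standard references on commutative algebra.
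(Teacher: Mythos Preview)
Your approach is correct and genuinely different from the paper's. Both proofs begin the same way: set $s=\depth M_\p$, $t=\Ht((I+\p)/\p)$, choose $\q\in\min V(I+\p)$ with $\Ht(\q/\p)=t$, and reduce to showing $\depth M_\q\le s+t$. From there the arguments diverge. The paper invokes the shifted localization principle for attached primes (Lemma~\ref{L:NQ}): since $H^s_{\p R_\p}(M_\p)\neq 0$, its set of attached primes is nonempty, and Lemma~\ref{L:NQ} applied in the local ring $R_\q$ transfers this to a nonvanishing $H^{s+t}_{\q R_\q}(M_\q)$, whence $\depth M_\q\le s+t$. You instead prove the classical chain inequality $\depth M_\q\le\depth M_\p+\Ht(\q/\p)$ by an elementary inductive argument on regular sequences and associated primes. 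Your route is more self-contained and, notably, does not require $R$ to be a quotient of a Cohen-Macaulay local ring, whereas the paper's use of Lemma~\ref{L:NQ} tacitly imposes that hypothesis even though the lemma is stated without it. The paper's argument, on the other hand, stays entirely within the attached-prime framework that drives the rest of Section~3, so it meshes seamlessly with the proof of Proposition~\ref{T:Local_CM}.

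One small wrinkle in your sketch: you say the chain inequality is proved ``by induction on $\Ht(\q/\p)$'', but in your Case~2 you pass from $M$ to $M/xM$ with the same pair $(\p,\q)$, so $\Ht(\q/\p)$ does not drop. The induction that actually closes there is on $\depth M_\p$ (Case~1 is the base case $\depth M_\p=0$; Case~3 reduces to that base case directly; Case~2 decreases $\depth M_\p$ by one). This is a cosmetic slip, not a real gap, and as you note the inequality is in any case standard.
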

	\begin{proof}
		Let $s = \mathrm{depth}M_{\p}$ and $t = \Ht(\frac{I+\p}{\p})$. This means $H^s_{\p R_{\p}}(M_{\p}) \neq 0$, so $\Att_{R_{\p}}(H^s_{\p R_{\p}}(M_{\p}))\neq \emptyset$ by Lemma \ref{L:1} (i). Let $\q \in \min V(I+ \p)$ be such that $\Ht(\frac{\q}{\p}) = t$. By applying Lemma \ref{L:NQ}, we see that $\q R_{\q} \in \Att_{R_{\q}}(H^{s+t}_{\q R_{\q}}(M_{\q}))$, which implies $H^{s+t}_{\q R_{\q}}(M_{\q}) \neq 0$. Therefore, $\depth M_{\q} \le s+t$. Since $\depth_R(\q, M) \le \depth M_{\q}$, and $I \subseteq \q$, we have $\depth_R(I, M) \le \depth_R(\q, M) \le s+t$.
	\end{proof}
The depth formula of Huneke and Trivedi was studied by the first author in \cite{An} for any local ring whose pseudo-supports are closed. In that work, the set $\Lambda_M$ was described in terms of these pseudo-supports. In particular, when $(R, \m)$ is a local ring which is a homomorphic image of a Cohen-Macaulay ring, the Huneke-Trivedi depth formula is known to hold with $\Lambda_M$ being the set of minimal attached primes of local cohomology modules. Using the shifted localization principle for attached primes of local cohomology modules, introduced by L. T. Nhan and P. H. Quy (see Lemma \ref{L:NQ}), we provide a new proof for this result. This approach also allows us to derive an application for studying the finiteness dimension formula.
	\begin{proposition}\label{T:Local_CM}
	Let $(R,\m)$ be a Noetherian local ring which is a homomorphic image of a Cohen-Macaulay local ring. Let $M$ be a finitely generated $R$-module. If we set
		$\Lambda_M = \bigcup_{i \ge 0} \min \Att_R(H^i_{\m}(M))$, \linebreak	then for any ideal $I$ of $R$,
	$$\mathrm{depth}_R(I, M) = \min_{\p \in \Lambda_M} \{\Ht(\frac{I+ \p}{\p}) + \mathrm{depth}M_{\p}\}.$$
\end{proposition}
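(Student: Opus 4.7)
The inequality $\depth_R(I,M) \le \min_{\p \in \Lambda_M}\{\Ht((I+\p)/\p) + \depth M_\p\}$ is immediate from Lemma~\ref{L:depth_inequality} once one observes that $\Lambda_M \subseteq \Supp_R M$: each $\p \in \min \Att_R H^i_\m(M)$ contains $\Ann_R H^i_\m(M) \supseteq \Ann_R M$ by Lemma~\ref{L:1}(ii). Applying the lemma to each $\p \in \Lambda_M$ and taking the minimum yields the bound.

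For the reverse inequality, the plan is to induct on $\dim R$. The base case $\dim R = 0$ is trivial since both sides equal $0$. For the inductive step, set $d = \depth_R(I,M)$; since $H^d_I(M) \neq 0$, pick $\q \in \Ass_R H^d_I(M)$. A standard localization argument (using $H^i_I(M)_\q = H^i_{IR_\q}(M_\q)$ and the minimality of $d$) shows $\depth_{R_\q}(IR_\q, M_\q) = d$. If $\q \subsetneq \m$, then $R_\q$ is a homomorphic image of a Cohen-Macaulay local ring of dimension strictly less than $\dim R$, so the inductive hypothesis yields some $\p R_\q \in \min \Att_{R_\q} H^j_{\q R_\q}(M_\q)$ (for some $j$) realizing the equality in $R_\q$. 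Applying Lemma~\ref{L:NQ}(ii) translates this attached prime upstairs: $\p \in \Att_R H^{j+\dim R/\q}_\m(M)$, and minimality is preserved because any $\p_1 \subsetneq \p$ automatically satisfies $\p_1 \subseteq \q$, hence $\p \in \Lambda_M$. Since heights can only grow under further localization, $\Ht_R((I+\p)/\p) \le \Ht_{R_\q}((IR_\q+\p R_\q)/\p R_\q)$, while $\depth M_\p = \depth (M_\q)_{\p R_\q}$; combining gives $\Ht_R((I+\p)/\p) + \depth M_\p \le d$, which with the easy direction yields the desired equality.

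The remaining case $\q = \m$ is handled via the Grothendieck spectral sequence $E_2^{p,q} = H^p_\m(H^q_I(M)) \Rightarrow H^{p+q}_\m(M)$. Here $\m \in \Ass H^d_I(M)$ forces $H^0_\m(H^d_I(M)) \neq 0$, and the vanishing $H^q_I(M) = 0$ for $q < d$ kills every differential entering or leaving $E_2^{0,d}$, so $E_\infty^{0,d} = E_2^{0,d} \neq 0$ appears as a subquotient of $H^d_\m(M)$, giving $H^d_\m(M) \neq 0$. Picking $\p \in \min \Att_R H^d_\m(M) \subseteq \Lambda_M$, Lemma~\ref{L:NQ}(ii) yields $\depth M_\p \le d - \dim R/\p$, while trivially $\Ht((I+\p)/\p) \le \dim R/\p$, so $\Ht((I+\p)/\p) + \depth M_\p \le d$. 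The main obstacle is the bookkeeping in the inductive step: verifying that a prime minimal in the set of attached primes in $R_\q$ remains minimal in the corresponding attached set in $R$ requires careful use of the shifted localization principle of Lemma~\ref{L:NQ}, and the passage from heights in $R_\q$ back to heights in $R$ tacitly relies on the catenary property automatic for quotients of Cohen-Macaulay local rings.
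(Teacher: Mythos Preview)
Your argument is correct, but it takes a noticeably different route from the paper's proof. The paper avoids induction entirely: it chooses $\q \in \Ass_R(M/(x_1,\dots,x_t)M)$ containing $I$ for a maximal $M$-sequence $x_1,\dots,x_t$ in $I$, so that $\depth M_\q = t$ and $H^t_{\q R_\q}(M_\q)\neq 0$ are immediate; then a single application of Lemma~\ref{L:NQ} produces a minimal attached prime $\p R_\q$ of $H^t_{\q R_\q}(M_\q)$ which lifts to $\p \in \Lambda_M$, and a second application bounds $\depth M_\p$ by $t - \Ht(\q/\p)$. Since $I\subseteq \q$ gives $\Ht((I+\p)/\p)\le \Ht(\q/\p)$, the reverse inequality follows in one stroke.

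By contrast, you work with $\q \in \Ass_R H^d_I(M)$ and induct on $\dim R$, which forces you to split off the case $\q=\m$ and invoke the Grothendieck spectral sequence $H^p_\m(H^q_I(M)) \Rightarrow H^{p+q}_\m(M)$ to see $H^d_\m(M)\neq 0$. This works, and the lifting of minimality via Lemma~\ref{L:NQ} together with the height comparison under localization is handled correctly. The trade-off is that your approach brings in heavier machinery (induction, spectral sequences) where the paper's choice of $\q$---coming from an associated prime of $M/(\underline{x})M$ rather than of $H^d_I(M)$---gives $\depth M_\q = d$ for free and makes the case distinction and spectral sequence unnecessary.
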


	\begin{proof}
		By Lemma \ref{L:depth_inequality}, we only need to prove the inequality
		$$\mathrm{depth}_R(I, M) \geq \min_{\p \in \Lambda_M} \{\Ht(\frac{I+\p}{\p}) + \mathrm{depth}M_{\p}\}.$$
		Let $t = \mathrm{depth}_R(I, M)$. This means there exists an $M$-regular sequence $x_1, \ldots, x_t$ in $I$. Thus, $I$ consists of zero-divisors on the module $M/(x_1, \ldots,x_t)M$. This implies there exists a prime ideal $\q \in \Ass_R(M/(x_1, \ldots,x_t)M)$ such that $I \subseteq \q$. By standard properties of depth, $\depth M_{\q} = t$, which implies $H^t_{\q R_{\q}}(M_{\q}) \neq 0$.
		
		By Lemma \ref{L:1}(i), there is a prime $\p_0 R_{\q} \in \Att_{R_{\q}}(H^t_{\q R_{\q}}(M_{\q}))$. Let $\p \in \Spec(R)$ be such that $\p R_{\q}$ is a minimal element of $\Att_{R_{\q}}(H^t_{\q R_{\q}}(M_{\q}))$. Using Lemma \ref{L:NQ}, we deduce that $\p \in \min \Att_R(H^{t+\dim R/\q}_{\m}(M))$. Therefore, $\p \in \Lambda_M$.
		
		Applying Lemma \ref{L:NQ} again, we find that $H^{t - \dim(R/\p) + \dim(R/\q)}_{\p R_{\p}}(M_{\p}) \neq 0$, which is equivalent to $H^{t - \Ht(\q/\p)}_{\p R_{\p}}(M_{\p}) \neq 0$. This gives the inequality $\depth M_{\p} \le t - \Ht(\q/\p)$. Rearranging, we get:
		$$t \ge \Ht(\q/\p) + \depth M_{\p}.$$
		Since $I \subseteq \q$, we have $\Ht(\frac{I+\p}{\p}) \le \Ht(\frac{\q+\p}{\p}) = \Ht(\q/\p)$. Thus,
		$$t \ge \Ht(\frac{I+\p}{\p}) + \depth M_{\p}.$$
		As this holds for a specific $\p \in \Lambda_M$, it certainly holds for the minimum over all primes in $\Lambda_M$. The proof is complete.
	\end{proof}

The finiteness dimension $f_I(M) = \inf\{i \mid H_I^i(M) \text{ is not finitely generated}\}$ of a finitely generated $R$-module $M$ with respect to an ideal $I$ is a fundamental invariant that measures the boundary between finite and non-finite local cohomology.
The foundational result in this area is Grothendieck's Finiteness Theorem (see \cite[9.5.2]{BS}). It asserts that for a ring $R$ which is a homomorphic image of a regular ring, the finiteness dimension can be calculated by a depth formula. Specifically, $f_I(M)$ is the minimum of the values $\depth(M_\p) + \Ht((I+\p)/\p)$ taken over all prime ideals $\p \in \Supp(M) \setminus \Var(I)$.

 The corollary below establishes the finiteness dimension $f_I(M)$ when $R$ is a homomorphic image of a Cohen-Macaulay local ring, using the specific finite set of primes $\Lambda_M$.

\begin{corollary}\label{C:f-dim}
	Let $(R,\m)$ be a local ring which is a homomorphic image of a Cohen-Macaulay local ring and $M$ a finitely generated $R$-module. With $\Lambda_M$ as in Theorem \ref{T:Local_CM}, we have
	$$f_I(M)=\min_{\p\in\Lambda_M \setminus V(I)}\{\Ht(\frac{I+\p}{\p})+\depth(M_{\p})\}.$$
\end{corollary}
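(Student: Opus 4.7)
The plan is to combine Grothendieck's Finiteness Theorem with a reduction of the indexing prime set from $\Supp(M)\setminus V(I)$ to the finite set $\Lambda_M \setminus V(I)$ via the techniques of Proposition~\ref{T:Local_CM}. Although Grothendieck's formula
\[
f_I(M) = \min_{\p \in \Supp(M)\setminus V(I)}\{\Ht((I+\p)/\p) + \depth M_\p\}
\]
is classically stated for homomorphic images of regular rings, it transfers to our setting: by Cohen's structure theorem, $\widehat R$ is an image of a complete regular local ring, so Grothendieck's theorem applies to $\widehat R$, and faithful flatness of completion preserves the finiteness dimension as well as the relevant depth/height data (for each $\p \in \Supp(M) \setminus V(I)$, a minimal prime $\P$ of $\p\widehat R$ satisfies $\depth \widehat M_\P = \depth M_\p$ and $\Ht((I\widehat R + \P)/\P) = \Ht((I+\p)/\p)$), so the formula descends from $\widehat R$ to $R$.

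Given Grothendieck's formula, the corollary reduces to showing that $\min_{\Supp(M)\setminus V(I)}=\min_{\Lambda_M\setminus V(I)}$. One direction is automatic from $\Lambda_M \subseteq \Supp(M)$. For the other, given $\p_0 \in \Supp(M)\setminus V(I)$ with $s = \depth M_{\p_0}$ and $t = \Ht((I+\p_0)/\p_0)$, I would mirror the proof of Proposition~\ref{T:Local_CM}: the nonvanishing of $H^s_{\p_0 R_{\p_0}}(M_{\p_0})$ gives an attached prime $\p'' R_{\p_0}$, Lemma~\ref{L:NQ} lifts $\p''$ to $\Att_R(H_\m^{s+\dim R/\p_0}(M))$, and selecting $\p' \in \min \Att_R(H_\m^{s+\dim R/\p_0}(M))$ with $\p' \subseteq \p''$ places $\p'$ in $\Lambda_M$ with $\p' \subseteq \p_0$. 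Crucially, $I \not\subseteq \p_0$ then forces $I \not\subseteq \p'$, so $\p' \in \Lambda_M \setminus V(I)$.

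To verify $\Ht((I+\p')/\p') + \depth M_{\p'} \le s + t$: a second application of Lemma~\ref{L:NQ} at $\p'$, together with the catenarity identity $\dim R/\p' = \dim R/\p_0 + \Ht(\p_0/\p')$ valid for any quotient of a CM local ring, gives $\depth M_{\p'} \le s - \Ht(\p_0/\p')$; taking $\q \in \min V(I+\p_0)$ with $\Ht(\q/\p_0) = t$ and invoking catenarity again yields $\Ht((I+\p')/\p') \le \Ht(\q/\p') = t + \Ht(\p_0/\p')$, and summing gives the bound. The main subtlety — and what distinguishes this corollary from Proposition~\ref{T:Local_CM} — is the construction of $\p'$ as a prime in $\Lambda_M \setminus V(I)$ rather than all of $\Lambda_M$; the trick of arranging $\p' \subseteq \p_0$ and exploiting $\p_0 \not\in V(I)$ is what yields the restriction to $\Lambda_M \setminus V(I)$ and the precise formula for $f_I(M)$.
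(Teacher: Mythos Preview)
Your proposal is correct and follows essentially the same strategy as the paper: invoke Grothendieck's finiteness formula $f_I(M) = \min_{\p \in \Supp(M)\setminus V(I)}\{g(\p)\}$ (with $g(\p) = \Ht((I+\p)/\p)+\depth M_\p$) and then shrink the index set to $\Lambda_M \setminus V(I)$, using $\p' \subseteq \p_0 \notin V(I)$ to ensure the constructed prime lies outside $V(I)$. The execution differs in two minor respects. First, the paper does not descend Grothendieck's formula from $\widehat R$; it instead cites \cite[Corollary~1.2]{Kaw} to observe that $R$ is universally catenary with Cohen--Macaulay formal fibers and then applies \cite[Exercise~9.6.6]{BS} directly---this is cleaner than your descent sketch, where the asserted equality $\Ht((I\widehat R+\P)/\P)=\Ht((I+\p)/\p)$ does hold (via quasi-unmixedness of $R/\p$) but is not as automatic as your parenthetical suggests. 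Second, for the reduction step the paper applies Proposition~\ref{T:Local_CM} as a black box to $R_\q$ with the ideal $\q R_\q$, obtaining $\depth M_\q = \min_{\p \in \Lambda_M,\,\p\subseteq\q}\{\Ht(\q/\p)+\depth M_\p\}$, and then uses the catenary height inequality $\Ht(\q/\p_1)+\Ht((I+\q)/\q)\ge\Ht((I+\p_1)/\p_1)$; you instead unpack that proposition and argue directly with Lemma~\ref{L:NQ}. The two routes produce the same prime in $\Lambda_M\setminus V(I)$ and the same bound $g(\p')\le g(\p_0)$.
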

\begin{proof}
	Assume that $(R,\m)$ be a local ring which is a homomorphic image of a Cohen-Macaulay local ring. By \cite[Corollary 1.2]{Kaw}, $R$ is universally catenary and has Cohen-Macaulay formal fibers. Set $g(\p) := \Ht(\frac{I+\p}{\p})+\depth(M_{\p})$.
	By \cite[Exercise 9.6.6]{BS}, $f_I(M) = \min_{\p \in \Supp(M) \setminus \Var(I)} \{g(\p)\}.$
	
	Set $s_\Lambda := \min_{\p \in \Lambda_M \setminus \Var(I)} \{g(\p)\}$. We need to prove $f_I(M) = s_\Lambda$.
	
	It is obvious that  $f_I(M) \le s_\Lambda$.
	
	Let $\q \in \Supp(M) \setminus \Var(I)$. Set $R' := R_\q$ and $M' := M_\q$. By Proposition \ref{T:Local_CM}, $$ \mathrm{depth}_{R'}(I', M') = \min_{\p' \in \Lambda_{M'}} \{\Ht_{R'}(\frac{I'+ \p'}{\p'}) + \mathrm{depth}(M'_{\p'})\}  $$
	for any ideal $I'$ of $R'$.
	Note that $$\Lambda_{M'} = \Lambda_{M_\q}=\{\p R_\q \mid \p \in \Lambda_M \text{ and } \p \subseteq \q\}.$$ 
	Let $I' := \p R_\q$. For any $\p'=\p R_{\q} \in \Lambda_{M_\q}$, we have $\mathrm{depth}(M'_{\p'}) = \mathrm{depth}((M_\q)_{\p R_\q}) = \mathrm{depth}(M_\p),$ $ \mathrm{depth}_{R'}(I', M') = \mathrm{depth}_{R_\q}(\q R_\q, M_\q) = \depth(M_\q) $,  and $\Ht_{R'}(\frac{I'+ \p'}{\p'}) = \Ht_{R_\q}(\frac{\q R_\q}{\p R_\q}) = \Ht_R(\q/\p)$. Hence
	$$ \depth(M_\q) = \min_{\p \in \Lambda_M, \p \subseteq \q} \{\Ht(\q/\p) + \depth(M_\p)\}. $$
This implies that
	$$ g(\q) = \depth(M_\q) + \Ht\left(\frac{I+\q}{\q}\right) = \left( \min_{\p \in \Lambda_M, \p \subseteq \q} \{\depth(M_\p) + \Ht(\q/\p)\} \right) + \Ht\left(\frac{I+\q}{\q}\right). $$ 
	Let $\p_1 \in \Lambda_M$ be a prime such that $\p_1 \subseteq \q$ and $\depth(M_\q) = \depth(M_{\p_1}) + \Ht(\q/\p_1)$. Such a $\p_1$ must exist.
	Then we have:
	$$ g(\q) = \depth(M_{\p_1}) + \Ht(\q/\p_1) + \Ht\left(\frac{I+\q}{\q}\right). $$
	Now, we use a standard height inequality that holds in catenary rings (which $R$ is, as a homomorphic image of a Cohen-Macaulay ring):
	$$ \Ht(\q/\p_1) + \Ht\left(\frac{I+\q}{\q}\right) \ge \Ht\left(\frac{I+\p_1}{\p_1}\right). $$
		Applying this inequality, we get:
	$$ g(\q) = \depth(M_{\p_1}) + \left( \Ht(\q/\p_1) + \Ht\left(\frac{I+\q}{\q}\right) \right) \ge \depth(M_{\p_1}) + \Ht\left(\frac{I+\p_1}{\p_1}\right) = g(\p_1). $$
	So, we have shown that for any $\q \in \Supp(M) \setminus \Var(I)$, there exists a $\p_1 \in \Lambda_M$ such that $g(\q) \ge g(\p_1)$. It is obvious that $\p_1 \in \Lambda_M \setminus \Var(I)$.
		This proves that $$ \min_{\q \in \Supp(M) \setminus \Var(I)} \{g(\q)\} \ge \min_{\p \in \Lambda_M \setminus \Var(I)} \{g(\p)\} $$
		or $f_I(M) \ge s_\Lambda$.
		
		Therefore $f_I(M) = s_\Lambda$.
\end{proof}

\begin{remark} Assume that $(R,\m)$ be a local ring which is a homomorphic image of a Cohen-Macaulay local ring. By Proposition \ref{T:Local_CM}  and Corollary \ref{C:f-dim}, we have
$f_I(M) \ge \depth(I, M)$. We will consider when equality holds. We will consider when equality holds.

Assume that there exists a prime ideal $\p_0 \in \Lambda_M \setminus \Var(I)$ such that $\depth(I, M) = g(\p_0)$, where $g(\p) := \Ht(\frac{I+\p}{\p})+\depth(M_{\p})$. This implies:
$$f_I(M) = \min_{\p \in \Lambda_M \setminus \Var(I)} \{g(\p)\} \le g(\p_0)=\depth(I, M) .$$
Hence $f_I(M) = \depth(I, M) = g(\p_0).$ 

Conversely, if the minimum for $\depth(I, M)$ is attained only at primes inside $\Var(I)$.
In this case, for every prime ideal $\p' \in \Lambda_M \setminus \Var(I)$, we have $g(\p') > \depth(I, M)$.
In this case $f_I(M)> \depth(I, M)$.
\end{remark}

Now we go to the global case. Firstly, we prove that the set $\Lambda_M$ in Theorem \ref{T:Global_Gorenstein} is well-defined.
	
	\begin{proposition}\label{P:Lambda_independent}
		Let $R$ be a Noetherian ring which is a homomorphic image of a Gorenstein ring $S$ via a surjection $f: S \to R$. Let $M$ be a finitely generated $R$-module. Then the set
		$$ \Lambda_M = \bigcup_{i \ge 0} \min\, \Ass_R(\Ext^i_S(M, S)) $$
		 is independent of the choice of $S$ and the surjection $f$.
	\end{proposition}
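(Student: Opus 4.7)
The plan is to give an intrinsic description of $\Lambda_M$ that makes no reference to the Gorenstein ring $S$ or the surjection $f$; once such a description is established, independence of $(S,f)$ follows immediately. The key tool will be Lemma \ref{L:key_duality} applied after localization at each prime, which converts associated primes of Ext modules into attached primes of local cohomology of $M$.

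Fix a surjection $f : S \to R$ with $S$ Gorenstein of finite dimension. For $\p \in \Spec(R)$, let $\q = f^{-1}(\p)$ and $n_\q = \Ht_S(\q) = \dim S_\q$, so that $S_\q$ is a Gorenstein local ring of dimension $n_\q$ and $R_\p = S_\q / (\ker f) S_\q$ is a homomorphic image of it. Since $M$ is finitely generated and $S$ is Noetherian, Ext commutes with localization, so $\Ext^i_S(M, S)_\p \cong \Ext^i_{S_\q}(M_\p, S_\q)$ as $R_\p$-modules. For a finitely generated $R$-module $N$, the condition $\p \in \min \Ass_R(N)$ is equivalent to $N_\p$ being a nonzero $R_\p$-module of finite length, i.e.\ $\Ass_{R_\p}(N_\p) = \{\p R_\p\}$. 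Applying Lemma \ref{L:key_duality} to $R_\p$ over $S_\q$ yields
\[ \Ass_{R_\p}\bigl(\Ext^i_{S_\q}(M_\p, S_\q)\bigr) = \Att_{R_\p}\bigl(H^{n_\q - i}_{\p R_\p}(M_\p)\bigr), \]
and combining this with Lemma \ref{L:1}(iii), I would conclude that $\p \in \min \Ass_R(\Ext^i_S(M,S))$ if and only if $\Att_{R_\p}(H^{n_\q - i}_{\p R_\p}(M_\p)) = \{\p R_\p\}$.

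Taking the union over $i \ge 0$ and reindexing by $j = n_\q - i$, and noting that $H^j_{\p R_\p}(M_\p) = 0$ outside the range $0 \le j \le \dim R_\p \le n_\q$, I would obtain
\[ \Lambda_M \;=\; \bigl\{\, \p \in \Spec(R) : \exists\, j \ge 0, \ \Att_{R_\p}\bigl(H^j_{\p R_\p}(M_\p)\bigr) = \{\p R_\p\} \,\bigr\}. \]
The right-hand side depends only on $R$ and $M$, which establishes the independence claim. The one subtle point to handle carefully is that the cohomological shift $n_\q - i$ depends on $\p$ (via $\q$), but this $\p$-dependence is absorbed by the reindexing $j = n_\q - i$, which is a bijection between the nonnegative degrees $i$ and the relevant cohomological degrees $j$ once one truncates the vanishing tails. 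Finiteness of $\Lambda_M$ is then a by-product of any single chosen presentation: $\Ext^i_S(M, S) = 0$ for $i > \dim S$, and each nonzero Ext is a finitely generated $R$-module with only finitely many associated primes.
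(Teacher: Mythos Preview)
Your proof is correct and follows essentially the same strategy as the paper: both establish independence by identifying $\Lambda_M$ with an intrinsic set defined via attached primes of the local cohomology modules $H^j_{\p R_\p}(M_\p)$, using localization of Ext and local duality (Lemma~\ref{L:key_duality}) at each prime. The only cosmetic difference is that you phrase the intrinsic condition as $\Att_{R_\p}(H^j_{\p R_\p}(M_\p)) = \{\p R_\p\}$ (equivalently, finite nonzero length), whereas the paper writes it as $\p R_\p \in \min \Att_{R_\p}(H^j_{\p R_\p}(M_\p))$; since $\p R_\p$ is the maximal ideal of $R_\p$, these two conditions are trivially equivalent.
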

	
	\begin{proof}
	For a prime ideal $\p \in \Spec(R)$, let $H^i_{\p R_\p}(M_\p)$ be the $i$-th local cohomology module of $M_\p$ with respect to the ideal $\p R_\p \subseteq R_\p$. We define the set $\Lambda'_M$ as
		$$ \Lambda'_M := \{ \p \in \Spec(R) \mid \p R_\p \in \bigcup_{i \ge 0} \min\, \Att_{R_\p}(H^i_{\p R_\p}(M_\p)) \}. $$
		This definition is independent of $S$. We will now prove $\Lambda_M = \Lambda'_M$.

		Let $\p \in \Lambda_M$. By definition, there exists an integer $k \ge 0$ such that $\p \in \min\, \Ass_R(\Ext^k_S(M, S))$.
		Let $\q = f^{-1}(\p)$ be the corresponding prime ideal in $S$. The ring $S_\q$ is a Gorenstein local ring, and we have an induced surjection $f_\p: S_\q \to R_\p$.
		
		Since $\p$ is a minimal element of $\Ass_R(\Ext^k_S(M, S))$, localizing at $\p$ implies that $\p R_\p$ is a minimal element of $\Ass_{R_\p}((\Ext^k_S(M, S))_\p)$. The $\Ext$ functor commutes with flat localization, so we have
		$$ (\Ext^k_S(M, S))_\p \cong \Ext^k_{S_\q}(M_\p, S_\q). $$
		Let $d = \dim(S_\q)$. By the Local Duality Theorem applied to the Gorenstein local ring $S_\q$ and the module $M_\p$, we have an equality of prime ideal sets:
		$$ \Ass_{R_\p}(\Ext^k_{S_\q}(M_\p, S_\q)) = \Att_{R_\p}(H^{d-k}_{\p R_\p}(M_\p)). $$
		(The primes are in $R_\p$ because the $\Ext$ module is an $R_\p$-module.)
		Combining these facts, we see that $\p R_\p$ is a minimal element of $\Att_{R_\p}(H^{d-k}_{\p R_\p}(M_\p))$. By the definition of $\Lambda'_M$, this implies that $\p \in \Lambda'_M$. Thus, $\Lambda_M \subseteq \Lambda'_M$.
		
	Let $\p \in \Lambda'_M$. By definition, there exists an integer $i \ge 0$ such that $\p R_\p \in \min\, \Att_{R_\p}(H^i_{\p R_\p}(M_\p))$.
		Let $\q = f^{-1}(\p)$ and $d = \dim(S_\q)$. Let $k = d-i$. By local duality,
		$$ \Att_{R_\p}(H^i_{\p R_\p}(M_\p)) = \Ass_{R_\p}(\Ext^k_{S_\q}(M_\p, S_\q)) = \Ass_{R_\p}((\Ext^k_S(M,S))_\p). $$
		So, $\p R_\p$ is a minimal element of the set $\Ass_{R_\p}((\Ext^k_S(M,S))_\p)$. We know that
		$$ \Ass_{R_\p}((\Ext^k_S(M,S))_\p) = \{ \mathfrak{r} R_\p \mid \mathfrak{r} \in \Ass_R(\Ext^k_S(M,S)), \mathfrak{r} \subseteq \p \}. $$
		We claim that the minimality of $\p R_\p$ in this local set implies the minimality of $\p$ in the global set $\Ass_R(\Ext^k_S(M,S))$. Assume for contradiction that $\p$ is not minimal, i.e., there exists $\mathfrak{r} \in \Ass_R(\Ext^k_S(M,S))$ with $\mathfrak{r} \subsetneq \p$. Then $\mathfrak{r} R_\p$ would be an element of $\Ass_{R_\p}((\Ext^k_S(M,S))_\p)$ and $\mathfrak{r} R_\p \subsetneq \p R_\p$. This contradicts the minimality of $\p R_\p$.
		Therefore, $\p$ must be a minimal prime in $\Ass_R(\Ext^k_S(M,S))$. By definition, this means $\p \in \Lambda_M$. Thus, $\Lambda'_M \subseteq \Lambda_M$.
		
		 we conclude that $\Lambda_M = \Lambda'_M$. Since $\Lambda'_M$ is independent of the choice of $S$ and $f$, so is $\Lambda_M$.
	\end{proof}
		
	\begin{proof}[Proof of Theorem \ref{T:Global_Gorenstein}]
		The independence of $\Lambda_M$ was just shown in Proposition \ref{P:Lambda_independent}. Since $S$ has finite dimension, $\Lambda_M$ is finite set.
		
		 We now prove the depth formula.
		It is a standard result that depth can be computed locally:
		$$ \depth_R(I, M) = \min_{\m \in V(I)} \depth_{R_{\m}}(IR_{\m}, M_{\m}). $$
		For any maximal ideal $\m \in V(I)$, let $\n = f^{-1}(\m)$. The ring $R_{\m}$ is a local homomorphic image of the Gorenstein local ring $S_{\n}$. Let $n_{\m} = \dim S_{\n}$. We can apply the local version of the depth formula, which follows from Lemma \ref{L:key_duality} and Theorem \ref{T:Local_CM}. The corresponding set for the local ring $R_{\m}$ is:
		$$ \Lambda_{M_{\m}} = \bigcup_{i \ge 0} \min \Att_{R_{\m}}(H^i_{\m R_{\m}}(M_{\m})) = \bigcup_{i \ge 0} \min \Ass_{R_{\m}}(\Ext_{S_{\n}}^{n_{\m} - i}(M_{\m}, S_{\n})). $$
		As shown in the proof of Proposition \ref{P:Lambda_independent}, this set is precisely the localization of our global set:
		$$ \Lambda_{M_{\m}} = \{\p R_{\m} \mid \p \in \Lambda_M, \p \subseteq \m \}.$$
		Applying the depth formula at each localization $R_{\m}$ as in Proposition \ref{P:Lambda_independent}
		\begin{align*}
			\depth_R(I, M) &= \min_{\m \in V(I)} \left( \depth_{R_{\m}}(IR_{\m}, M_{\m}) \right) \\
			&= \min_{\m \in V(I)} \left( \min_{\p' \in \Lambda_{M_{\m}}} \{\Ht(\frac{I R_{\m} + \p'}{\p'}) + \depth (M_{\m})_{\p'}\} \right) \\
			&= \min_{\m \in V(I)} \left( \min_{\substack{\p \in \Lambda_M \\ \p \subseteq \m}} \{\Ht(\frac{I R_{\m} + \p R_{\m}}{\p R_{\m}}) + \depth (M_{\m})_{\p R_{\m}}\} \right).
		\end{align*}
		Since $\Ht((\dots)_{\m}) = \Ht(\dots)$ for primes contained in $\m$, and $\depth (M_{\m})_{\p R_{\m}} = \depth M_{\p}$, we can simplify this to:
		$$ \depth_R(I, M) = \min_{\m \in V(I)} \left( \min_{\substack{\p \in \Lambda_M \\ \p \subseteq \m}} \{\Ht(\frac{I+\p}{\p}) + \depth M_{\p}\} \right). $$
		This is a minimum taken over pairs $(\m, \p)$ where $\p \in \Lambda_M$, $\m \in V(I)$, and $\p \subseteq \m$. This is equivalent to taking the minimum over all $\p \in \Lambda_M$ and choosing a suitable $\m \in V(I+\p)$ that contains it. Thus,
		$$ \depth_R(I, M) = \min_{\p \in \Lambda_M} \{\Ht(\frac{I+\p}{\p}) + \depth M_{\p}\}. $$
		This completes the proof.
	\end{proof}
	
We have established the depth formula for local rings that are homomorphic images of Cohen-Macaulay rings and for global rings that are homomorphic images of Gorenstein rings. This naturally leads to the question of whether the result can be extended to the most general setting: non-local rings that are quotients of Cohen-Macaulay rings.

\begin{question}
	Let $R$ be a Noetherian ring which is a homomorphic image of a Cohen-Macaulay ring $S$. Let $M$ be a finitely generated $R$-module, and let
	$$\Lambda_M = \bigcup_{i \ge 0} \min \Ass_R(\Ext^i_S(M, S)).$$
	Is it true that the set $\Lambda_M$ is finite, independent of the choice of $S$, and for any ideal $I$ of $R$, the following formula holds?
	$$\mathrm{depth}_R(I, M) = \min_{\p \in \Lambda_M} \{\Ht(\frac{I+ \p}{\p}) + \mathrm{depth}M_{\p}\}.$$
\end{question}

For the above question to be well-posed, one must first establish the finiteness and independence of the set $\Lambda_M$. This task is highly non-trivial and remains an open problem for us, even in the local setting.

\end{document}